\newcommand\cyr{%
\renewcommand\rmdefault{wncyr}%
\renewcommand\sfdefault{wncyss}%
\renewcommand\encodingdefault{OT2}%
\normalfont
\selectfont}
\DeclareTextFontCommand{\textcyr}{\cyr}
\DeclareFontFamily{OT1}{rsfs}{}
\DeclareFontShape{OT1}{rsfs}{n}{it}{<-> rsfs10}{}
\DeclareMathAlphabet{\mathscr}{OT1}{rsfs}{n}{it}
\numberwithin{equation}{section}
\newtheorem{theorem}{Theorem}[section]
\newtheorem{lemma}[theorem]{Lemma}
\newtheorem{corollary}[theorem]{Corollary}
\newtheorem{question}{Question}
\theoremstyle{definition}
\newtheorem{definition}[theorem]{Definition}
\newtheorem{remark}[theorem]{Remark}
\theoremstyle{remark}
\newtheorem{example}[theorem]{Example}
\newcommand{\Ext}{\operatorname{Ext}}
\newcommand{\fm}{\frak{m}}
\newcommand{\fq}{\frak{q}}
\newcommand{\fn}{\frak{n}}
\begin{document}
\title[Upper bound of multiplicity]{Upper bound of multiplicity in Cohen-Macaulay rings of prime characteristic}

\author[Duong Thi Huong]{Duong Thi Huong}
\address{Department of Mathematics, Thang Long University, Hanoi, Vietnam}
\email{huongdt@thanglong.edu.vn}

\author[Pham Hung Quy]{Pham Hung Quy}
\address{Department of Mathematics, FPT University, Hanoi, Vietnam}
\email{quyph@fe.edu.vn}

\keywords{Multiplicity, The Frobenius test exponent, Cohen-Macaulay.\\
{\em 2020 Mathematics Subject Classification}: 13A35, 13H10.}

\begin{abstract} Let $(R, \fm)$ be a local ring of prime characteristic $p$ and of dimension $d$ with the embedding dimension $v$, type $s$ and the Frobenius test exponent for parameter ideals $\mathrm{Fte}(R)$. We will give an upper bound for the multiplicity of Cohen-Macaulay rings in prime characteristic in terms of $\mathrm{Fte}(R),d,v$ and $s$. Our result extends the main results for Gorenstein rings due to Huneke and Watanabe \cite{HW15}.
\end{abstract}

\maketitle

\section{Introduction}
Throughout this paper, let $(R, \fm)$ be a Noetherian commutative local ring of prime characteristic $p$ and of dimension $d$. In 2015, Huneke and Watanabe \cite{HW15} gave an upper bound of the multiplicity $e(R)$ of an $F$-pure ring $R$ in terms of the dimension $d$ and the embedding dimension $v$. Namely, Huneke and Watanabe proved that 
$$e(R) \le \binom{v}{d}$$
for any $F$-pure ring. If $R$ is $F$-rational, the authors of \cite{HW15} provided a better bound that $e(R) \le \binom{v-1}{d-1}$ (cf. \cite[Theorem 3.1]{HW15}). 
If $R$ is Gorenstein, the upper bound is largely reduced by the duality as follows (cf. \cite[Theorem 5.1]{HW15})\\
(1) If $R$ is Gorenstein and $F$-pure then 
$$e(R)\leq \systeme*{{2\binom{v-r-1}{r}}\quad\quad\quad  \text{ if }\dim (R)=2r+1,
{\binom{v-r}{r}+\binom{v-r-1}{r-1}} \text{ if }\dim (R)=2r.
}
$$
(2) If $R$ is Gorenstein and $F$-rational then
$$e(R)\leq\systeme*{{{{v-r-1}\choose{r}}+{{v-r-2}\choose{r-1}}} \text{ if }\dim (R)=2r+1,
{2 {{v-r-1}\choose{r-1}}}\quad\quad\quad \text{ if }\dim (R)=2r.
}
$$
In 2019, Katzman and Zhang tried to remove the $F$-pure condition in Huneke-Watanabe's theorem by using the Hartshorne-Speiser-Lyubeznik number $\mathrm{HSL}(R)$. Notice that $\mathrm{HSL}(R) = 0$ if $R$ is $F$-injective (e.g. $R$ is $F$-pure). If $R$ is Cohen-Macaulay, Katzman and Zhang \cite[Theorem 3.1]{KZ18} proved the following inequality
$$e(R) \le Q^{v-d} \binom{v}{d},$$
where $Q = p^{\mathrm{HSL}(R)}$. They also constructed examples to show that their bound is asymptotically sharp (cf. \cite[Remark 3.2]{KZ18}). Recall that the Frobenius test exponent for parameter ideals of $R$, denoted by $\mathrm{Fte}(R)$, is the least integer (if exists) $e$ satisfying that $(\fq^F)^{[p^e]} = \fq^{[p^e]}$ for every parameter ideal $\fq$, where $\fq^F$ is the Frobenius closure of $\fq$. It is asked by Katzman and Sharp that whether $\mathrm{Fte}(R) < \infty$ for every (equidimensional) local ring (cf. \cite{KS06}). If $R$ is Cohen-Macaulay then $\mathrm{Fte}(R) = \mathrm{HSL}(R)$. Moreover the question of Katzman and Sharp has affirmative answers when $R$ is either generalized Cohen-Macaulay by \cite{HKSY06} or $F$-nilpotent by \cite{Q19} (see the next section for the details). Recently, we (cf. \cite[Theorem 3.]{HQ20}) extended the result for any ring of finite Frobenius test exponent for parameter ideals. Set $Q = p^{\mathrm{Fte}(R)}$, we have:\\
(1) Suppose $\mathrm{Fte}(R) < \infty$. Then 
$$e(R) \le Q^{v-d}\binom{v}{d}.$$ 
(2) If $R$ is $F$-nilpotent then 
$$e(R) \le Q^{v-d}\binom{v-1}{d-1}.$$ 
The main result of this paper is given reduced upper bound for the multiplicity of the ring when the ring is Cohen-Macaulay, that is a natural extension of Huneke and Watanabe's result in Gorenstein cases (cf. \cite[Theorem 5.1]{HW15}).
\begin{theorem}
 Let $(R, \fm)$ be a Cohen-Macaulay local ring of prime characteristic $p$ with the dimension $d$, the embedding dimension $v$ and the type $s$. Set $Q = p^{\mathrm{Fte}(R)}$. Then
\begin{enumerate}[(1)]
\item We have
$$e(R)\leq \systeme*{{(s+1)Q^{v-d}\binom{v-r-1}{r}}\quad\quad\quad  \text{ if }\dim (R)=2r+1,
{\frac{(s+1)}{2} Q^{v-d}\left(\binom{v-r}{r}+\binom{v-r-1}{r-1}\right)} \text{ if }\dim (R)=2r.
}
$$
\item If $R$ is $F$-nilpotent then 
$$e(R)\leq\systeme*{{\frac{(s+1)}{2} Q^{v-d}\left({{v-r-1}\choose{r}}+{{v-r-2}\choose{r-1}}\right)} \text{ if }\dim (R)=2r+1 ,
{(s+1)Q^{v-d}{{v-r-1}\choose{r-1}}}\quad\quad\quad \text{ if }\dim (R)=2r.
}
$$
\end{enumerate}
\end{theorem}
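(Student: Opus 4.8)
The plan is to push the proof of the bounds $e(R)\le Q^{v-d}\binom{v}{d}$ and, for $F$-nilpotent $R$, $e(R)\le Q^{v-d}\binom{v-1}{d-1}$ of \cite{HQ20} one step further, extracting the extra factor that comes from the type. First I would make the standard harmless reductions: passing to $R[X]_{\fm R[X]}$ we may assume the residue field $k=R/\fm$ is infinite without disturbing $d$, $v$, $s$, $\mathrm{Fte}(R)$ or $e(R)$. Choose a general minimal reduction $\underline{x}=x_1,\dots,x_d$ of $\fm$; since $R$ is Cohen--Macaulay, $\underline{x}$ is a regular sequence, so $e(R)=\ell(R/(\underline{x}))$, and since $\underline x$ may be taken so that its images span a $d$-dimensional subspace of $\fm/\fm^2$, the Artinian local ring $A:=R/(\underline{x})$ has embedding dimension $w:=v-d$ and, type being unchanged modulo a regular sequence, type $s$. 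Write $\fn$ for the maximal ideal of $A$.

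Next I would assemble three facts about $A$. (i) \emph{Macaulay:} $\gr_{\fn}A$ is standard graded and generated in degree one by $w$ elements, hence a quotient of a polynomial ring in $w$ variables, so $\ell(A/\fn^{i})\le M_i:=\binom{w+i-1}{w}$ for every $i\ge 0$. (ii) \emph{Socle degree:} here I would re-examine the argument of \cite[Theorem~3]{HQ20} (itself built on \cite{KZ18}) in order to extract, beyond the numerical bound, the containment $\fm^{(d+1)Q}\subseteq(\underline{x})$ — and, when $R$ is $F$-nilpotent, using \cite{Q19} for the finiteness of $\mathrm{Fte}(R)$ together with the sharper estimate available in that case, $\fm^{dQ}\subseteq(\underline{x})$ — equivalently $\fn^{D+1}=0$, where $D:=(d+1)Q-1$ in case (1) and $D:=dQ-1$ in case (2). (iii) \emph{Duality:} $A$ is Artinian of type $s$, so its canonical module $\omega_A$ is generated by $s$ elements and $\ell(\omega_A)=\ell(A)$; by Matlis duality $0:_A I$ is dual to $\omega_A/I\omega_A$, whence $\ell(0:_A I)=\ell(\omega_A/I\omega_A)\le s\,\ell(A/I)$ for every ideal $I\subseteq A$.

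The core of the argument is a symmetrization of these facts. For any $0\le i\le D+1$ we have $\fn^{D+1-i}\cdot\fn^{i}\subseteq\fn^{D+1}=0$, so $\fn^{D+1-i}\subseteq 0:_A\fn^{i}$, and therefore
\[
e(R)-\ell(A/\fn^{D+1-i})=\ell(\fn^{D+1-i})\le\ell(0:_A\fn^{i})\le s\,\ell(A/\fn^{i}).
\]
Interchanging $i$ with $D+1-i$ gives the companion inequality $e(R)\le\ell(A/\fn^{i})+s\,\ell(A/\fn^{D+1-i})$, and averaging the two yields
\[
e(R)\le\tfrac{s+1}{2}\bigl(\ell(A/\fn^{i})+\ell(A/\fn^{D+1-i})\bigr)\le\tfrac{s+1}{2}\bigl(M_i+M_{D+1-i}\bigr).
\]
Since $i\mapsto M_i$ is convex, I would choose $i$ as close as possible to $(D+1)/2$: with $D+1=(d+1)Q$ this is $i=(r+1)Q$ for $d=2r+1$ (then $D+1-i=(r+1)Q$ as well) and $i=\lfloor(2r+1)Q/2\rfloor$, $D+1-i=\lceil(2r+1)Q/2\rceil$ for $d=2r$; the $F$-nilpotent cases are identical with $D+1=dQ$. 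Substituting, and using the elementary estimates $\binom{w+mQ-1}{w}\le Q^{w}\binom{w+m-1}{w}$ (each factor of the larger side exceeding its partner by the nonnegative amount $(Q-1)(j-1)$) together with $M_{\lfloor n/2\rfloor}+M_{\lceil n/2\rceil}\le Q^{w}\bigl(\binom{w+r}{w}+\binom{w+r-1}{w}\bigr)$ for $n=(2r+1)Q$, turns the four cases into precisely the four bounds in the statement; the $s=1$, $Q=1$ specializations recover \cite[Theorem~5.1]{HW15}.

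The manipulations of the last paragraph, and facts (i) and (iii), are routine. The step I expect to require the most care is fact (ii): I need the proof of \cite{HQ20} to deliver not merely $e(R)\le Q^{v-d}\binom{v}{d}$ but the actual containment $\fm^{(d+1)Q}\subseteq(\underline{x})$ for a general minimal reduction (and its $F$-nilpotent sharpening) — in effect a Brian\c{c}on--Skoda-type statement governed by the Frobenius test exponent rather than by tight closure — so that the graded bound (i) and the duality (iii), which see the whole Hilbert function of $A$, can be brought to bear simultaneously. A minor point to state carefully is that the relevant Artinian quotient genuinely has type $s$; this is immediate for $R/(\underline{x})$, but the entire improvement over \cite{HQ20} rests on it.
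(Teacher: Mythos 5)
Your overall architecture --- play a ``Macaulay''-type bound on the Hilbert function of an Artinian quotient against a duality/type bound on annihilators, symmetrize in complementary exponents, and average --- is exactly the strategy of the paper (and of Huneke--Watanabe), and your facts (i) and (iii), as well as the observation that type is preserved modulo a maximal regular sequence, are fine. But the step you yourself flagged as delicate, fact (ii), is a genuine gap, and it is not a matter of ``extracting more'' from the proof in \cite{HQ20}: that argument does not yield $\fm^{(d+1)Q}\subseteq(\underline{x})$, and this containment should not be expected to hold. What the Frobenius test exponent actually delivers is the following: $\fm^{d+1}\subseteq\overline{\fq^{d+1}}\subseteq\fq^{F}$ (Brian\c{c}on--Skoda together with the fact that a Cohen--Macaulay ring has no embedded primes), hence $x^{Q}\in\fq^{[Q]}$ for every $x\in\fm^{d+1}$, i.e.\ $(\fm^{d+1})^{[Q]}\subseteq\fq^{[Q]}$. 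In characteristic $p$ the $Q$-th powers of the elements of an ideal $I$ generate the bracket power $I^{[Q]}$, not the ordinary power $I^{Q}$, and in a non-regular ring the inclusion $I^{[Q]}\subseteq\fq^{[Q]}$ says essentially nothing about $I^{Q}$ versus $\fq$ --- this is precisely why Frobenius closure is a nontrivial operation. The most one can extract by pigeonhole is $\fm^{(d+1)(g(Q-1)+1)}\subseteq\fq$ with $g=\mu(\fm^{d+1})$, whose exponent is far too large to produce the stated bounds. Moreover, if $\fm^{(d+1)Q}\subseteq\fq$ were true, your fact (i) alone would give $e(R)\le\binom{v-d+(d+1)Q-1}{v-d}\sim\frac{(d+1)^{v-d}}{(v-d)!}\,Q^{v-d}$, which for $v-d\ge 2$ undercuts the Katzman--Zhang bound $Q^{v-d}\binom{v}{d}$ that the paper records as asymptotically sharp.

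The repair is to keep the entire argument at the level of bracket powers, which is what the paper does: work in $A=R/\fq^{[Q]}$ (so $\ell_R(A)=Q^{d}e(R)$ since $R$ is Cohen--Macaulay) and replace $\fn^{i}$ by $(\fn^{l})^{[Q]}$ for $0\le l\le d$ (resp.\ $d+1$ in case (1)). One still has $(\fn^{d-l})^{[Q]}\cdot(\fn^{l})^{[Q]}\subseteq(\fn^{d})^{[Q]}=0$, so your symmetrization and the type estimate $\ell\bigl((\fn^{d-l})^{[Q]}\bigr)\le s\,\ell\bigl(A/(\fn^{l})^{[Q]}\bigr)$ survive verbatim: the socle of $(\fn^{d-l})^{[Q]}$ lies in $\mathrm{Soc}(A)$, which has dimension $s$ because $x_1^{Q},\dots,x_d^{Q}$ is again a maximal regular sequence. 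Your Macaulay step (i) must then be replaced by an explicit monomial count adapted to bracket powers, namely $\ell\bigl(A/(\fn^{l})^{[Q]}\bigr)\le Q^{v}\binom{v-d+l-1}{l-1}$, obtained by writing the exponents of the generators $y_j$ complementary to $\fq$ in the form $\beta_jQ+\gamma_j$ with $\gamma_j<Q$ and $\sum\beta_j<l$. Choosing $l=r$ (and $l=r+1$ when $d$ is odd) and averaging as you propose then gives exactly the four stated bounds. In short, the skeleton of your proof is the right one, but it must be applied to $R/\fq^{[Q]}$ with Frobenius powers rather than to $R/\fq$ with ordinary powers of the maximal ideal.
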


We will prove the above theorem in the last section. In the next section we collect some useful materials.
\section{Preliminaries}
%\noindent {\bf $F$-singularities.} 
We firstly give the definition of the tight closure and the Frobenius closure of ideals.
\begin{definition}[\cite{HH90,H96}]
Let $R$ have characteristic $p$. We denote by $R^{\circ}$ the set of elements
of $R$ that are not contained in any minimal prime ideal. Then for any ideal $I$ of $R$ we define
\begin{enumerate}
  \item The {\it Frobenius closure} of $I$, $I^F = \{x \mid  x^{p^e} \in I^{[p^e]} \text{ for some } p^e\}$, where $I^{[p^e]} = (x^{p^e} \mid x \in I)$.
  \item The {\it tight closure} of $I$, $I^* = \{x \mid cx^{p^e} \in I^{[p^e]} \text{ for some } c \in R^{\circ} \text{ and for all } p^e \gg 0\}$.
\end{enumerate}
\end{definition}
The Frobenius endomorphism of $R$ induces the natural Frobenius action on local cohomology $F: H^i_{\fm}(R) \to H^i_{\fm}(R)$ for all $i \ge 0$. By a similar way, we can define the Frobenius closure and tight closure of zero submodule of local cohomology, and denote by $0^F_{H^i_{\fm}(R)}$ and $0^*_{H^i_{\fm}(R)}$ respectively.

Let $I$ be an ideal of $R$. The {\it Frobenius test exponent} of $I$, denoted by $\mathrm{Fte}(I)$, is the smallest number $e$ satisfying that $(I^F)^{[p^e]} = I^{[p^e]}$. By the Noetherianess of $R$, $\mathrm{Fte}(I)$ exists (and depends on $I$). In general, there is no upper bound for the Frobenius test exponents of all ideals in a local ring by the example of Brenner \cite{B06}. In contrast, Katzman and Sharp \cite{KS06} showed the existence of a uniform bound of Frobenius test exponents if we restrict to the class of parameter ideals in a Cohen-Macaulay local ring. For any local ring $(R, \fm)$ of prime characteristic $p$ we define the {\it Frobenius test exponent for parameter ideals}, denoted by $\mathrm{Fte}(R)$, is the smallest integer $e$ such that $(\fq^F)^{[p^e]} = \fq^{[p^e]}$ for every parameter ideal $\fq$ of $R$, and $\mathrm{Fte}(R) = \infty$ if we have no such integer. Katzman and Sharp raised the following question.
\begin{question}\label{Question}
Is $\mathrm{Fte}(R)$ a finite number for any (equidimensional) local ring?
\end{question}
The Frobenius test exponent for parameter ideals is closely related to an invariant defined by the Frobenius actions on the local cohomology modules $H^i_{\fm}(R)$, namely {\it the Hartshorne-Speiser-Lyubeznik number} of $H^i_{\fm}(R)$. The Hartshorne-Speiser-Lyubeznik number of $H^i_{\fm}(R)$ is a nilpotency index of Frobenius action on $H^i_{\fm}(R)$ and it is defined as follows 
$$\mathrm{HSL}(H^i_{\fm}(R)) = \min \{e \mid F^e(0^F_{H^i_{\fm}(R)}) = 0  \}.$$
By \cite[Proposition 1.11]{HS77} and \cite[Proposition 4.4]{L97}, $\mathrm{HSL}(H^i_{\fm}(R))$ is well defined (see also \cite{Sh07}). The Hartshorne-Speiser-Lyubeznik number of $R$ is 
$$\mathrm{HSL}(R) = \max \{\mathrm{HSL}(H^i_{\fm}(R)) \mid i = 0, \ldots, d\}.$$
\begin{remark}\label{Fte}
\begin{enumerate}
\item If $R$ is Cohen-Macaulay then $\mathrm{Fte}(R) =\mathrm{HSL}(R)$ by Katzman and Sharp \cite{KS06}. In general, we proved in \cite{HQ19} that $\mathrm{Fte}(R) \ge \mathrm{HSL}(R)$. Moreover, Shimomoto and Quy \cite[Main Theorem B]{QS17} constructed a local ring satisfying that $\mathrm{HSL}(R) = 0$, i.e. $R$ is $F$-injective, but $\mathrm{Fte}(R) > 0$.
\item Huneke, Katzman, Sharp and Yao \cite{HKSY06} gave an affirmative answer for Question \ref{Question} for generalized Cohen-Macaulay rings. 
\item In 2019, Quy \cite{Q19} provided a simple proof for the theorem of Huneke, Katzman, Sharp and Yao. By the same method he also proved that $\mathrm{Fte}(R) < \infty$ if $R$ is $F$-nilpotent. In 2019, Maddox \cite{M18} extended this result for {\it generalized $F$-nilpotent} rings (i.e. $H^i_{\fm}(R) / 0^F_{H^i_{\fm}(R)}$ has finite length for all $i < d$), and more general in \cite{HQ21} by us.
\end{enumerate}
\end{remark} 
We next recall some classes of $F$-singularities mentioned in this paper.
\begin{definition}
A local ring $(R, \fm)$ is called {\it $F$-rational} if it is a homomorphic image of a Cohen-Macaulay local ring and every parameter ideal is tight closed, i.e. $\fq^* = \fq$ for all $\fq$.
\end{definition}
\begin{definition}
A local ring $(R, \fm)$ is called {\it $F$-pure} if the Frobenius endomorphism $F: R \to R, x \mapsto x^p$ is a pure homomorphism. If $R$ is $F$-pure, then it is proved that every ideal $I$ of $R$ is Frobenius closed, i.e. $I^F = I$ for all $I$.
\end{definition}
\begin{definition}
\begin{enumerate}
\item A local ring $(R, \fm)$ is called {\it $F$-injective} if the Frobenius action on $H^i_{\fm}(R)$ is injective, i.e. $0^F_{H^i_{\fm}(R)} = 0$, for all $i \ge 0$.
\item  A local ring $(R, \fm)$ is called {\it $F$-nilpotent} if the Frobenius actions on all lower local cohomologies $H^i_{\fm}(R)$, $i \le d-1$, and $0^*_{H^d_{\fm}(R)}$ are nilpotent, i.e. $0^F_{H^i_{\fm}(R)} = H^i_{\fm}(R)$ for all $i \le d-1$ and $0^F_{H^d_{\fm}(R)} = 0^*_{H^d_{\fm}(R)}$.
\end{enumerate}
\end{definition}
\begin{remark}\label{F-sing}
\begin{enumerate}
\item It is well known that an equidimensional local ring $R$ is $F$-rational if and only if it is Cohen-Macaulay and $0^*_{H^d_{\fm}(R)} = 0$.
\item An excellent equidimensional local ring is $F$-rational if and only if it is both $F$-injective and $F$-nilpotent.
\item Suppose every parameter ideal of $R$ is Frobenius closed. Then $R$ is $F$-injective (cf. \cite[Main Theorem A]{QS17}). In particular, an $F$-pure ring is $F$-injective.
\item An excellent equidimensional local ring $R$ is $F$-nilpotent if and only if $\fq^* = \fq^F$ for every parameter ideal $\fq$ (cf. \cite[Theorem A]{PQ19}). 
\end{enumerate}
\end{remark}
\section{Proof of the main result}
This section is devoted to prove the main result of this paper. Without loss of generality we will assume that $R$ is complete with an infinite residue field. We recall Brian\c{c}on-Skoda's Theorem (cf. \cite[Theorem 5.6]{HH90}) that give a relation between the tight closure and the integral of an ideal.
\begin{theorem}[Brian\c{c}on-Skoda]\label{Tight closure briancon}
Let $R$ be a Noetherian ring of prime characteristic $p$, $I$ ideal generated by $n$ elements. Then for all $w\geq 0$ we have
$$\overline{I^{n+w}}\subseteq (I^{w+1})^*.$$
\end{theorem}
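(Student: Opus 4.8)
The plan is to derive the statement directly from the definition of tight closure by combining two ingredients: a purely combinatorial containment among powers of $I$, obtained by a pigeonhole argument, and the standard characterization of integral closure that supplies a single multiplier $c \in R^{\circ}$ working simultaneously in all powers. First I would reduce to the case where $I = (f_1, \ldots, f_n)$ is generated by exactly $n$ elements, which costs nothing. The combinatorial heart is the inclusion
$$I^{(n+w)p^e} \subseteq (I^{w+1})^{[p^e]} \quad\text{for every } e \ge 0.$$
The left-hand ideal is generated by monomials $f_1^{a_1}\cdots f_n^{a_n}$ with $\sum_{i=1}^n a_i = (n+w)p^e$. Writing $a_i = q_i p^e + r_i$ with $0 \le r_i \le p^e - 1$, the estimate $\sum_i r_i \le n(p^e-1) < np^e$ forces $p^e\sum_i q_i = (n+w)p^e - \sum_i r_i > wp^e$, hence $\sum_i q_i \ge w+1$. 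Since $f_1^{a_1}\cdots f_n^{a_n} = \bigl(f_1^{q_1}\cdots f_n^{q_n}\bigr)^{p^e}\, f_1^{r_1}\cdots f_n^{r_n}$ and the factor $f_1^{q_1}\cdots f_n^{q_n}$ lies in $I^{w+1}$, its $p^e$-th power lies in $(I^{w+1})^{[p^e]}$, giving the inclusion.

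Next I would invoke the standard fact that, for $u \in \overline{I^{n+w}}$, there exists $c \in R^{\circ}$ with $c\,u^m \in (I^{n+w})^m = I^{(n+w)m}$ for all $m \ge 1$. Specializing to $m = p^e$ and chaining with the inclusion above yields
$$c\,u^{p^e} \in I^{(n+w)p^e} \subseteq (I^{w+1})^{[p^e]} \quad\text{for all } e \ge 0,$$
which is precisely the defining condition for $u \in (I^{w+1})^{*}$. Taking $u$ to range over $\overline{I^{n+w}}$ then delivers $\overline{I^{n+w}} \subseteq (I^{w+1})^{*}$.

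The pigeonhole step and the final bookkeeping are immediate, so the one genuine obstacle is the integral-closure lemma producing a \emph{uniform} multiplier $c \in R^{\circ}$ valid in every power $(I^{n+w})^m$ at once. I would establish this through the Rees algebra: $u \in \overline{I^{n+w}}$ means that $ut$ is integral over $B = \bigoplus_{i\ge 0}(I^{n+w})^i t^i$, so $B[ut]$ is a finite graded $B$-module; clearing a common denominator against a finite set of homogeneous generators produces a single element, which one checks can be kept outside every minimal prime of $R$ (all that membership in $R^{\circ}$ requires), with $c\,u^m \in (I^{n+w})^m$ for all $m$. Once this uniform $c$ is in hand, the rest of the argument is forced.
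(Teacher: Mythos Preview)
Your argument is correct and is precisely the classical Hochster--Huneke proof: the pigeonhole inclusion $I^{(n+w)p^e}\subseteq (I^{w+1})^{[p^e]}$ together with the uniform integral-closure multiplier $c\in R^{\circ}$ immediately yields $u\in(I^{w+1})^{*}$. The paper itself does not supply a proof of this theorem; it is quoted as \cite[Theorem~5.6]{HH90} and used as a black box, so there is nothing further to compare against beyond noting that your write-up reproduces the argument of the original source.

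Two minor remarks. First, the characterization you invoke (a single $c\in R^{\circ}$ with $c\,u^m\in (I^{n+w})^m$) is usually stated for all \emph{sufficiently large} $m$ rather than all $m\ge 1$; this is harmless here since the paper's definition of tight closure only requires the containment for $p^e\gg 0$. Second, the fact that $c$ can indeed be chosen in $R^{\circ}$ in an arbitrary Noetherian ring is exactly \cite[Theorem~6.8.12]{SH}, which the paper itself invokes in the proof of Corollary~\ref{briancon}(2), so your appeal to it is fully consistent with the paper's toolkit.
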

\begin{corollary}\label{HQ3.2.2} 
Keeping all assumptions of Theorem \ref{Tight closure briancon}, then for all $w\geq 0$ we have
$$\overline{I^{d+w}}\subseteq (I^{w+1})^*.$$
In particular, if $w=0$ then  
$$\overline{I^{d}}\subseteq I^*.$$
\end{corollary}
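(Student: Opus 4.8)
The plan is to deduce this directly from the Brian\c{c}on--Skoda Theorem (Theorem~\ref{Tight closure briancon}), after first arranging that the ideal in question is generated by at most $d$ elements, and then using that tight closure is inclusion-preserving. If $I$ happens to be generated by $n \le d$ elements the claim is essentially immediate: applying Theorem~\ref{Tight closure briancon} with $w$ replaced by $d-n+w\ge 0$ gives $\overline{I^{d+w}} = \overline{I^{n+(d-n+w)}}\subseteq (I^{d-n+w+1})^*$, and since $d-n+w+1\ge w+1$ we have $I^{d-n+w+1}\subseteq I^{w+1}$, hence $(I^{d-n+w+1})^*\subseteq (I^{w+1})^*$, which is what we want.

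For a general ideal $I$ I would pass to a minimal reduction. Since $R$ is local with infinite residue field (as assumed throughout this section), $I$ admits a minimal reduction $J=(a_1,\dots,a_\ell)$ with $\ell=\ell(I)\le \dim R=d$. Then $J^m$ is a reduction of $I^m$ for every $m\ge 1$, so $\overline{J^m}=\overline{I^m}$; in particular $\overline{I^{d+w}}=\overline{J^{d+w}}$. Applying the case already treated to $J$ (which is generated by $\ell\le d$ elements) yields $\overline{J^{d+w}}\subseteq (J^{w+1})^*$, and $(J^{w+1})^*\subseteq (I^{w+1})^*$ because $J\subseteq I$. Combining the three inclusions gives $\overline{I^{d+w}}\subseteq (I^{w+1})^*$; taking $w=0$ gives the special case $\overline{I^{d}}\subseteq I^*$.

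The only step that is not purely formal is the reduction to an ideal with at most $d$ generators: it uses the infinite residue field hypothesis (harmless in view of the reductions made at the start of the section) together with the standard fact that $\overline{J^m}=\overline{I^m}$ whenever $J$ is a reduction of $I$. Beyond citing these I do not expect any real obstacle, since everything else is a direct consequence of Brian\c{c}on--Skoda and the monotonicity of $(-)^*$.
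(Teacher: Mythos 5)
Your proof is correct and follows essentially the same route as the paper: pass to a minimal reduction $J$ of $I$ (using the infinite residue field to get $\mu(J)=\ell(I)\le d$), use $\overline{J^m}=\overline{I^m}$, apply Theorem~\ref{Tight closure briancon} to $J$, and conclude by monotonicity of tight closure. The only cosmetic difference is where the slack $d-\ell(I)$ is absorbed (you shift $w$ in the theorem; the paper instead uses $\overline{I^{d+w}}\subseteq\overline{I^{\ell(I)+w}}$), which changes nothing of substance.
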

\begin{proof}
Let $J$ be a minimal reduction of $I$. We have $\mu(J)=\ell(I)\leq d$ (cf. \cite[Proposition 8.3.7 and Corollary 8.3.9]{SH}) and $\overline{I^k}=\overline{J^k}$ for every positive integer $k$. Applying Theorem \ref{Tight closure briancon} for $J$,  
$$\overline{I^{\ell(I)+w}}=\overline{J^{\ell(I)+w}}=\overline{J^{\mu(J)+w}}\subseteq (J^{w+1})^*\subseteq (I^{w+1})^*.$$
Thus, $\overline{I^{d+w}}\subseteq (I^{w+1})^*.$
\end{proof}
\begin{corollary}\label{briancon} Let $(R, \fm)$ be a Noetherian local ring of dimension $d$ and $\fq$ parameter ideal. We have
\begin{enumerate}[(1)]
\item If $R$ is excellent and $F$-nilpotent then $\overline{\fq^d} \subseteq \fq^F$.
\item If all associated prime ideals of $R$ are minimal then $\overline{\fq^{d+1}} \subseteq \fq^F$.
\end{enumerate}
\end{corollary}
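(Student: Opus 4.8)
The two parts are proved separately; (1) is immediate from the machinery above, and the content is in (2).

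\smallskip
\noindent\textbf{Part (1).} A parameter ideal $\fq$ of the $d$-dimensional ring $R$ is generated by $d$ elements, so Theorem~\ref{Tight closure briancon} (equivalently Corollary~\ref{HQ3.2.2}) with $w=0$ gives $\overline{\fq^{d}}\subseteq\fq^{*}$. Since $R$ is excellent and $F$-nilpotent (in particular equidimensional), Remark~\ref{F-sing}(4) gives $\fq^{*}=\fq^{F}$, whence $\overline{\fq^{d}}\subseteq\fq^{F}$.

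\smallskip
\noindent\textbf{Part (2): reduction to the reduced case.} We may assume $R$ is complete. I would first reduce to $R$ reduced: let $N=\sqrt{0}$ and pick $e_{1}$ with $N^{[p^{e_{1}}]}=0$. For $x\in\overline{\fq^{d+1}}$ the image of $x$ lies in $\overline{(\fq R/N)^{d+1}}$, and once the reduced case provides $x^{p^{e_{0}}}\in\fq^{[p^{e_{0}}]}+N$, applying the $e_{1}$-th Frobenius together with $(a+b)^{p}=a^{p}+b^{p}$ yields $x^{p^{e_{0}+e_{1}}}\in\fq^{[p^{e_{0}+e_{1}}]}$, i.e.\ $x\in\fq^{F}$. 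So assume $R$ reduced; then $R^{\circ}$ is the set of nonzerodivisors and $R$ admits a test element $c$.

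\smallskip
\noindent\textbf{Part (2): main argument and obstacle.} Fix $x\in\overline{\fq^{d+1}}$ in this reduced $R$. Raising an equation of integral dependence of $x$ on $\fq^{d+1}$ to the $p^{e}$-th power, and using that $R$ has characteristic $p$, produces an equation of integral dependence of $x^{p^{e}}$ on $(\fq^{d+1})^{[p^{e}]}=(\fq^{[p^{e}]})^{d+1}$, so $x^{p^{e}}\in\overline{(\fq^{[p^{e}]})^{d+1}}$ for every $e$. As $\fq^{[p^{e}]}$ is again an $\fm$-primary ideal on $d$ generators, Corollary~\ref{HQ3.2.2} puts $x^{p^{e}}$ into $(\fq^{[p^{e}]})^{*}$. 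The crux is to upgrade this to $x^{p^{e}}\in\fq^{[p^{e}]}$ for some $e$, that is, to remove the test-element factor from the relation $c\,x^{p^{e}p^{e'}}\in\fq^{[p^{e}p^{e'}]}$. In Part~(1) this passage is automatic, being precisely the content of Remark~\ref{F-sing}(4); here one has only $\fq^{F}\subseteq\fq^{*}$, with possibly strict inclusion, so the hypothesis $\Ass R=\Min R$ must be used exactly at this step. I would descend to each minimal prime of the (now reduced) ring $R$, where $c$ becomes a nonzero element of a domain, and exploit that $x$ lies one step deeper in the integral-closure filtration — in $\overline{\fq^{d+1}}$, not merely in $\overline{\fq^{d}}$ — so that this extra power of $\fq$ is what gets consumed in clearing $c$, landing $x$ in $\fq^{F}$. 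Making this elimination precise, rather than the routine Frobenius bookkeeping, is the part I expect to be genuinely delicate.
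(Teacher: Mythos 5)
Your Part (1) is correct and is exactly the paper's argument: Corollary~\ref{HQ3.2.2} with $w=0$ gives $\overline{\fq^{d}}\subseteq\fq^{*}$, and $F$-nilpotence (with excellence) gives $\fq^{*}=\fq^{F}$ via Remark~\ref{F-sing}(4).

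Part (2), however, has a genuine gap, and you acknowledge it yourself: you stop at $x^{p^{e}}\in(\fq^{[p^{e}]})^{*}$ and defer the essential step of removing the multiplier $c$, offering only a sketch (descend to minimal primes, exploit the extra power of $\fq$) whose mechanism is never specified. That sketch points in the wrong direction: the hypothesis $\Ass R=\Min R$ provides no bridge from tight closure to Frobenius closure, and the entire tight-closure/test-element detour is unnecessary --- moreover, the existence of a test element requires hypotheses (excellence, or $F$-finiteness) not assumed in (2), and $\Ass R=\Min R$ need not survive completion, so even your preliminary reductions are shaky. The paper's proof of (2) uses no tight closure at all. Given $x\in\overline{\fq^{d+1}}$, the determinantal characterization of integral closure (\cite[Theorem 6.8.12]{SH}) produces a single $c\in R^{\circ}$ with $cx^{N}\in\fq^{(d+1)N}$ for all large $N$; the Artin--Rees lemma applied to $cR$ and the $\fq$-adic filtration gives a fixed $k$ with $cR\cap\fq^{(d+1)N}\subseteq c\,\fq^{(d+1)N-k}$ for large $N$; since $\Ass R=\Min R$ forces $c$ to be a nonzerodivisor, it cancels, yielding $x^{N}\in\fq^{(d+1)N-k}$. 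Taking $N=p^{e}\geq k$ and using that $\fq$ has $d$ generators, the pigeonhole inclusion $\fq^{(d+1)p^{e}-k}\subseteq\fq^{dp^{e}}\subseteq\fq^{[p^{e}]}$ gives $x\in\fq^{F}$. Your intuition that the extra power of $\fq$ in the exponent $d+1$ is ``what gets consumed in clearing $c$'' is exactly right, but the tool that consumes it is Artin--Rees, not a test-element elimination; without that step your Part (2) is not a proof.
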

\begin{proof} (1) By Corollary \ref{HQ3.2.2} we have $\overline{\fq^d} \subseteq \fq^*$. Moreover, $\fq^*=\fq^F$ since $R$ is $F$-nilpotent. Thus the first assertion holds.\\
(2) Take any $x\in \overline{\fq^{d+1}} $, By \cite[Theorem 6.8.12]{SH}, there exists $c\in R^{\circ}$ such that $cx^N\in \fq^{(d+1)N}$ for all large $N$, so $cx^N\subseteq cR\cap \fq^{(d+1)N}$. By Artin-Rees Lemma, there is $k\geq 1$ such that for large $N$ we have
$$cx^N\in cR\cap \fq^{(d+1)N}= \fq^{(d+1)N-k}(\fq^k\cap cR)\subseteq c\fq^{(d+1)N-k}.$$
 Because all associated prime ideals of $R$ are minimal,  $c$ is a non-divisior of zero so $x^N\in \fq^{(d+1)N-k}$ for large $N$. Hence, for large $N=p^e$ we have 
 $$x^{p^e}\in \fq^{(d+1)p^e-k}\subseteq \fq^{dp^e}\subseteq \fq^{[p^e]}.$$ 
Thus $x\in \fq^F$.
\end{proof}
We recall the concepts of type and socle of a module (cf. \cite[Section 1.2]{BH93}). 
Let $(R,\fm)$ be a Noetherian local ring, $M$ a $R$-finitely generated nonzero module with $\mathrm{depth}(M)=t$. Set $k=R/\fm$. Then the type of $M$ is 
$$\mathrm{r}(M):=\dim_k(\Ext_R^t(k,M)).$$ 
The socle of $M$ is $$\mathrm{Soc}(M):=(0:\fm)_M\cong \mathrm{Hom}_R(k,M).$$ If $\underline{x}$ is a maximal $M$-sequence then $\mathrm{r}(M)=\mathrm{dim}_k(\mathrm{Soc}(M/\underline{x}M))$.
\begin{lemma}\label{inequallength} 
Let $(R, \fm)$ be an Artinian local ring with $k=R/\fm$ infinite, and let $M$ be a finitely generated module over $R$ such that $\dim _k(\mathrm{Soc}(M))=s $. Khi đó $\ell_R(M)\leq s\ell_R(R)$.
\end{lemma}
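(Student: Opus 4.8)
The plan is to embed $M$ into a finite direct sum of copies of the injective hull $E:=E_R(k)$ of the residue field, and then read off the inequality from two facts: first, $\ell_R(E)=\ell_R(R)$, and second, the number of copies of $E$ needed is exactly the $k$-dimension of the socle. The identity $\ell_R(E)=\ell_R(R)$ is Matlis duality: the functor $(-)^{\vee}=\Hom_R(-,E)$ is exact (as $E$ is injective) and sends $k=R/\fm$ to $\mathrm{Soc}(E)\cong k$, so it preserves the length of any finite length module; applying this to $R$ and using $\Hom_R(R,E)=E$ gives $\ell_R(E)=\ell_R(R)<\infty$.

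First I would pass to the injective hull $M\hookrightarrow E_R(M)$. Since $R$ is Artinian local its only prime is $\fm$, so the only indecomposable injective $R$-module is $E$, and hence $E_R(M)\cong E^{(\Lambda)}$ for some set $\Lambda$ by Matlis' structure theorem for injectives over a Noetherian ring. Because $M\subseteq E_R(M)$ is an essential extension, any simple submodule of $E_R(M)$ must meet $M$ nontrivially and, being simple, therefore lies in $M$; consequently $\mathrm{Soc}(E_R(M))=\mathrm{Soc}(M)$. On the other hand socle commutes with direct sums, so $\mathrm{Soc}(E^{(\Lambda)})\cong k^{(\Lambda)}$. Comparing $k$-dimensions and using that $\dim_k\mathrm{Soc}(M)=s$ is finite (as $M$ is finitely generated), we get $|\Lambda|=s$, i.e. an embedding $M\hookrightarrow E^{s}$. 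From this it follows at once that $\ell_R(M)\le\ell_R(E^{s})=s\,\ell_R(E)=s\,\ell_R(R)$, which is the desired bound.

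The one step that is not purely formal is the identification $E_R(M)\cong E^{s}$, which rests on the structure theory of injective modules over a Noetherian (here, Artinian) ring together with the elementary observation that an essential extension does not change the socle; I expect this to be the main point to state carefully. If one wishes to avoid invoking that structure theorem, the same conclusion can be reached by a direct argument: fix a $k$-basis $u_1,\dots,u_s$ of $\mathrm{Soc}(M)$ and, for each $i$, pick (by Zorn's lemma) a submodule $N_i\subseteq M$ that is maximal among those containing $\sum_{j\ne i}ku_j$ but not $u_i$; one checks that $N_i\cap\mathrm{Soc}(M)=\sum_{j\ne i}ku_j$ and that $M/N_i$ has simple socle, hence $M/N_i\hookrightarrow E$ and $\ell_R(M/N_i)\le\ell_R(R)$, while $\bigcap_i N_i$ has zero socle and is therefore $0$; the induced injection $M\hookrightarrow\bigoplus_{i=1}^{s}M/N_i$ then gives $\ell_R(M)\le\sum_{i=1}^s\ell_R(M/N_i)\le s\,\ell_R(R)$. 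Neither argument uses that $k$ is infinite; that hypothesis is simply inherited from the running assumptions of the section.
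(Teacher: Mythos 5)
Your proposal is correct and follows essentially the same route as the paper: embed $M$ into $E_R(k)^s$ (the paper does this via $M\subseteq E_R(\mathrm{Soc}(M))\cong E^s$, you via the injective hull $E_R(M)$ and the structure theorem, which is the same embedding) and then use $\ell_R(E_R(k))=\ell_R(R)$ from Matlis duality. Your observation that the hypothesis ``$k$ infinite'' is never used is also accurate, and the elementary fallback argument via the submodules $N_i$ is a valid alternative, though not needed.
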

\begin{proof}
Since $M$ is finitely generated over Artinian ring, $M$ is Artinian module and $\mathrm{Soc}(M)$ is an essential extension of $M$. We have
$$M\subseteq E_R(\text{Soc}(M)).$$
Set $k=R/\fm$ and $E=E_R(k)$. By Matlis duality, $E$ has finite length over $R$ and $\ell_R(E)=\ell_R(R)$.
Moreover, $\mathrm{dim}_k(\mathrm{Soc}(M))=s$ so $\text{Soc} (M)\cong k^s$ and
$E_R(\text{Soc}(M))\cong E^s$. Thus $M\subseteq E^s$ and
$$\ell_R(M)\leq s\ell_R(E)=s\ell_R(R).$$
The proof is complete.
\end{proof}
\begin{theorem}\label{ChanTrenBoiCM}
Let $(R, \fm)$ be a Cohen-Macaulay ring with prime characteristic $p$ of dimension $d$ with the embedding dimension $v$ and the type $s$. Set $Q = p^{\mathrm{Fte}(R)}$. Then we have
\begin{enumerate}[(1)]
\item 
$$e(R)\leq \systeme*{{(s+1)Q^{v-d}\binom{v-r-1}{r}}\quad\quad\quad  \text{ if }\dim (R)=2r+1,
{\frac{(s+1)}{2} Q^{v-d}\left(\binom{v-r}{r}+\binom{v-r-1}{r-1}\right)} \text{ if }\dim (R)=2r.
}
$$
\item If $R$ is $F$-nilpotent then
$$e(R)\leq\systeme*{{\frac{(s+1)}{2} Q^{v-d}\left({{v-r-1}\choose{r}}+{{v-r-2}\choose{r-1}}\right)} \text{ if }\dim (R)=2r+1 ,
{(s+1)Q^{v-d}{{v-r-1}\choose{r-1}}}\quad\quad\quad \text{ if }\dim (R)=2r.
}
$$
\end{enumerate}
\end{theorem}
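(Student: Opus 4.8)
The plan is to adapt the Gorenstein argument of Huneke and Watanabe, replacing their use of the self-duality of an Artinian quotient of $R$ by a duality argument carried out on the \emph{canonical module} of that quotient, since in our setting $R/\fq$ is only Cohen--Macaulay of type $s$. As in the excerpt I would first reduce to the case that $R$ is complete with infinite residue field $k$, so that $R$ is excellent and has a canonical module. Then, using that $k$ is infinite, I would choose a minimal reduction $\fq=(x_1,\dots,x_d)$ of $\fm$ whose generators form part of a minimal generating set $x_1,\dots,x_d,z_1,\dots,z_{v-d}$ of $\fm$ (for instance, lift a generic degree-one homogeneous system of parameters of $\gr_{\fm}(R)$; these are automatically linearly independent since $\dim_k\fm/\fm^2=v\ge d$). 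Put $A:=R/\fq^{[Q]}$. Since $R$ is Cohen--Macaulay, $x_1^Q,\dots,x_d^Q$ is a regular sequence, so $A$ is Artinian of type $s$, and $\ell(A)=\ell(R/\fq^{[Q]})=Q^d\ell(R/\fq)=Q^d\,e(R)$, using that $R$ is Cohen--Macaulay and $\fq$ is a reduction of $\fm$. Finally set $J:=\fm^{[Q]}A=(z_1^Q,\dots,z_{v-d}^Q)A$, an ideal generated by $v-d$ elements, with $A/J\cong R/\fm^{[Q]}$.

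Next I would feed in Brian\c{c}on--Skoda and the test exponent. By Corollary \ref{briancon}, and because a Cohen--Macaulay ring satisfies $\Ass R=\Min R$, one has $\fm^{\delta}\subseteq\overline{\fm^{\delta}}=\overline{\fq^{\delta}}\subseteq\fq^F$, where $\delta=d+1$ in general and $\delta=d$ when $R$ is $F$-nilpotent (here one uses that $R$ is excellent). Raising this to the $Q$-th Frobenius power and using $(\fq^F)^{[Q]}=\fq^{[Q]}$ (the defining property of $\mathrm{Fte}(R)$, as $Q=p^{\mathrm{Fte}(R)}$) together with $(\fm^{\delta})^{[Q]}=(\fm^{[Q]})^{\delta}$, one gets $(\fm^{[Q]})^{\delta}\subseteq\fq^{[Q]}$, i.e. $J^{\delta}=0$ in $A$. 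In addition, writing $R=S/I$ with $(S,\fn)$ regular local of dimension $v$, $R/\fm^{[Q]}$ is a quotient of $S/\fn^{[Q]}$, so $\ell(A/J)\le Q^v$; and filtering $A/J^{k}$ by the $J^i/J^{i+1}$, each an $(A/J)$-module generated by the degree-$i$ monomials in $z_1^Q,\dots,z_{v-d}^Q$ (of which there are $\binom{v-d-1+i}{i}$), one obtains
$$\ell(A/J^{k})\ \le\ Q^{v}\sum_{i=0}^{k-1}\binom{v-d-1+i}{i}\ =\ Q^{v}\binom{v-d+k-1}{v-d}.$$

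The heart of the proof is a duality step. Let $\omega=\omega_A$. Then $\ell(\omega)=\ell(A)$, $\omega$ is minimally generated by $s$ elements (the type of $A$), and, crucially, $\mathrm{Soc}(\omega)$ is one-dimensional --- because $\omega\cong\Hom_k(A,k)$ by Matlis duality for the Artinian ring $A$, and $\mathrm{Soc}(\Hom_k(A,k))=\Hom_k(A/\fm,k)\cong k$. For each $j$ with $0\le j\le\delta-2$ I would apply additivity of length to $0\to J^{j+1}\omega\to\omega\to\omega/J^{j+1}\omega\to 0$. The quotient $\omega/J^{j+1}\omega$ is an $(A/J^{j+1})$-module on $s$ generators, so $\ell(\omega/J^{j+1}\omega)\le s\,\ell(A/J^{j+1})$. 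The submodule $J^{j+1}\omega$ is annihilated by $J^{\delta-1-j}$ (since $J^{\delta-1-j}J^{j+1}=J^{\delta}=0$) and, sitting inside $\omega$, has socle of dimension at most $1$; hence, applying Lemma \ref{inequallength} over the Artinian ring $A/J^{\delta-1-j}$, $\ell(J^{j+1}\omega)\le\ell(A/J^{\delta-1-j})$. Combining this with the estimate above gives, for every admissible $j$,
$$e(R)\ \le\ Q^{\,v-d}\Big(s\binom{v-d+j}{v-d}+\binom{v-d+\delta-2-j}{v-d}\Big).$$

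Finally I would optimize over $j$. When $\delta$ is even, take $j=\tfrac{\delta}{2}-1$: the two binomials coincide and one gets a bound of the form $(s+1)Q^{v-d}\binom{v-d+\delta/2-1}{v-d}$. When $\delta$ is odd, apply the last inequality for the two symmetric choices $j=\tfrac{\delta-3}{2}$ and $j=\tfrac{\delta-1}{2}$ and add them; the two binomials swap, giving $2e(R)\le(s+1)Q^{v-d}\big(\binom{v-d+(\delta-1)/2}{v-d}+\binom{v-d+(\delta-3)/2}{v-d}\big)$. Rewriting $\binom{v-d+k-1}{v-d}=\binom{v-d+k-1}{k-1}$ and substituting $\delta=d+1$ for part (1), $\delta=d$ for part (2), with $d=2r$ or $d=2r+1$, turns these into exactly the four inequalities of the theorem. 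The step I expect to be the real obstacle is the duality one: $A$ is not Gorenstein, hence not self-dual, so one must work with $\omega_A$ and exploit that $\omega_A$, being the Matlis dual of the cyclic module $A$, has a one-dimensional socle; this is precisely what makes Lemma \ref{inequallength} applicable with socle dimension $1$ to the submodules $J^{j+1}\omega_A$, so that the type $s$ is charged only once --- through $\mu(\omega_A)=s$ --- which produces the clean coefficient $s+1$ (and $\tfrac{s+1}{2}$ after the averaging trick). A subsidiary but necessary point is the careful choice of reduction in the first step: without arranging $J$ to have only $v-d$ generators one is left with weaker, $\binom{v+d}{d}$-type bounds.
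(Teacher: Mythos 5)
Your argument is correct and lands on exactly the inequalities of Theorem \ref{ChanTrenBoiCM}, but the pivotal length estimate is carried out in the Matlis-dual picture rather than the one the paper uses. The shared skeleton is identical: a minimal reduction $\fq$ extending to a minimal generating set of $\fm$, Brian\c{c}on--Skoda plus $(\fq^F)^{[Q]}=\fq^{[Q]}$ to force $J^{\delta}=0$ in $A=R/\fq^{[Q]}$ for $J=\fm^{[Q]}A$ (with $\delta=d$ in the $F$-nilpotent case, $\delta=d+1$ in general), the count $\ell(A/J^{k})\le Q^{v}\binom{v-d+k-1}{k-1}$, a two-term decomposition in which the type $s$ is charged to exactly one term, and the averaging of two symmetric choices to produce $\tfrac{s+1}{2}$. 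The difference is which module gets decomposed: the paper splits $A$ itself as $0\to J^{d-l}\to A\to A/J^{d-l}\to 0$ and bounds the \emph{submodule} by $s\,\ell(A/J^{l})$ via Lemma \ref{inequallength}, using $J^{d-l}\subseteq(0:_{A}J^{l})$ and $\dim_k\mathrm{Soc}(A)=s$; you split $\omega_A$ as $0\to J^{j+1}\omega_A\to\omega_A\to\omega_A/J^{j+1}\omega_A\to 0$, charging $s$ to the \emph{quotient} through $\mu(\omega_A)=s$ and bounding the submodule by $\ell(A/J^{\delta-1-j})$ through $\dim_k\mathrm{Soc}(\omega_A)=1$ and Lemma \ref{inequallength}. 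These are dual ways of spending the type once, and with $l=j+1$ they yield the literally identical inequality $\ell(A)\le s\,\ell(A/J^{l})+\ell(A/J^{\delta-l})$, so neither buys a sharper bound; yours is closer in spirit to Huneke--Watanabe's Gorenstein argument (where $\omega_A\cong A$), while the paper's avoids introducing $\omega_A$ at the cost of invoking that every submodule of $A$ has socle dimension at most $s$. Two minor points if you write yours up: the identification $\omega_A\cong\Hom_k(A,k)$ needs the coefficient field supplied by the completeness reduction (or just take $\omega_A=E_A(k)$, whose socle is $k$ and whose length is $\ell(A)$ by Matlis duality); and for very small $d$ (e.g.\ $d=1$ in the $F$-nilpotent case, where the range $0\le j\le\delta-2$ is empty) the bound must be read off directly from $J=0$ and $\ell(A)\le Q^{v}$ --- an edge case the paper's choice of $l$ glosses over as well.
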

\begin{proof}
Because the proofs of two assertions are almost the same, we will only prove (2)
Let $\fq = (x_1, \ldots, x_{d})$ be a minimal reduction of   $\fm$. Since $R$ is $F$-nipotent, $\mathrm{Fte}(R) < \infty$.  By Corollary \ref{briancon}(1), $\fm^{d} \subseteq \overline{\fm^{d}} = \overline{\fq^{d}} \subseteq \fq^F$. On the other hand, we have $(\fq^F)^{[Q]} = \fq^{[Q]}$. Thus $(\fm^{d})^{[Q]} \subseteq \fq^{[Q]}$.
Set $k=R/\fm$, $A=R/\fq^{[Q]}$, $\fn=\fm/\fq^{[Q]}$. Then $(A,\fn)$ is Artinian local ring and $(\fn^{d})^{[Q]}=0$. Let $l$ be arbitrary positive integer such that $ l\leq d$. We have $(\fn^{d-l})^{[Q]}\subseteq 0:_A(\fn^{l})^{[Q]}$ for all $ 1\le l\le d$. In other words, $(\fn^{d-l})^{[Q]}\subseteq\text{Ann}_A(\fn^{l})^{[Q]}$ and we can consider $(\fn^{d-l})^{[Q]}$  as a $A':=A/(\fn^{l})^{[Q]}$-module.
Moreover, $R$ is Cohen-Macaulay so $x_1,\ldots,x_{d}$ and $x_1^Q,\ldots,x_{d}^Q$ are maximal regular sequences. We have
$$\dim_k(\text{Soc}(\fn^{d-l})^{[Q]})\leq\dim_k(\text{Soc}(A))=\dim_k(\text{Soc} (R/\fq^{[Q]}))=r_R(R)=s.$$ 
The fisrt inequality due to $\text{Soc}(\fn^{d-l})^{[Q]}\subseteq \text{Soc}(A)$. By Lemma \ref{inequallength}, $\ell_A((\fn^{d-l})^{[Q]})=\ell_{A'}((\fn^{d-l})^{[Q]})\leq s\ell_{A'}(A')=s\ell_A(A').$ Thus,
\begin{align*}
e(R)=e(\fm)=e(\fq)=\frac{1}{Q^{d}} e(\fq^{[Q]})\le &\frac{1}{Q^{d}}\ell_R(R/\fq^{[Q]})\\
\leq &\frac{1}{Q^{d}}\ell_A(A)\\
\leq &\frac{1}{Q^{d}}(\ell_A((\fn^{d-l})^{[Q]})+\ell_A(A/(\fn^{d-l})^{[Q]}))\\
\leq&\frac{1}{Q^{d}}( s\ell_A(A/(\fn^{l})^{[Q]})+\ell_A(A/(\fn^{d-l})^{[Q]})).
 \end{align*}
Extend $x_1, \ldots, x_{d}$ to a minimal set of generators $x_1, \ldots, x_{d}, y_1, \ldots, y_{v-d}$ of $\fm$. Then $\bar{x}_1,\ldots,\bar{x}_{d}$, $\bar{y}_1, \ldots,\bar{y}_{v-d}$ is a set of generators of $\fn$. Now $A/(\fn^l)^{[Q]}$ is spanned by monomials
$$\bar{x}_1^{\alpha_1}\cdots \bar{x}_{d}^{\alpha_{d}}\bar{y}_1^{\beta_1Q + \gamma_1} \cdots \bar{y}_{v-d}^{\beta_{v-d}Q + \gamma_{v-d}},$$  
where $0 \le\alpha_1, \ldots, \alpha_{d}, \gamma_1, \ldots, \gamma_{v-d} < Q$ and $0 \le \beta_1 + \cdots + \beta_{v-d} < l$. The number of such monomials is less than or equal to $Q^{v} \binom{v-d+l-1}{l-1}$,
thus $$\ell_A(A/(\fn^l)^{[Q]}) \le Q^{v} \binom{v-d+l-1}{l-1}. $$
Similarly $$\ell_A(A/(\fn^{d-l})^{[Q]}) \le Q^{v} \binom{v-d+d-l-1}{d-l-1}. $$
So we have $$e(R) \le Q^{v-d}\left(s\binom{v-d+l-1}{l-1}+\binom{v-d+d-l-1}{d-l-1}\right).$$
Choosing $l=r$ if $d=2r$, choosing $l=r$ and $l=r+1$ if $d=2r+1$, we obtain that
$$e(R)\leq\systeme*{{\frac{(s+1)}{2} Q^{v-d}\left({{v-r-1}\choose{r}}+{{v-r-2}\choose{r-1}}\right)} \text{ if }\dim (R)=2r+1 ,
{(s+1)Q^{v-d}{{v-r-1}\choose{r-1}}}\quad\quad\quad \text{ if }\dim (R)=2r.
}
$$ 
The proof is complete.
\end{proof}
\begin{remark}
If $R$ is Gorenstein then $r(R)=1$, by Theorem \ref{ChanTrenBoiCM} we have the result of Huneke
and Watanabe \cite[Theorem 5.1]{HW15}.
\end{remark}
\begin{example}
Let $S=\mathbb{F}_p[X,Y]$ be a polynomal ring over $\mathbb{F}_p$ with prime $p$, $\frak{ m}=(X,Y)$ maximal ideal $S$, $f=X^aY^a$. Set $R=S/(f)S$, then $R$ is Gorenstein ring of dimension $d=1$, of embedding dimension $v=2$ with the type $r(R)=s=1$ and $H^0_{\frak{ m}}(R)=0$.\\ 
Next, we will find $\mathrm{Fte}(R)$.
The \v{C}ech complex $\check{C}(X, Y;S)$: 
$$0 \to S \xrightarrow{\quad} S_{X}\oplus S_{Y}\xrightarrow{\phi} S_{XY}\to 0,$$
where $\phi(u,v)=v-u$. For simplyfication we also use $u$ and $v$ to denote their images in the localizations of $R$ at $u$ and $v$ respectively.\\
The set of the exponents of all monomials of $S_{X}$ is $$\{(s,r)\mid s,r\in \mathbb{Z}, r\geq 0\}.$$
The set of the exponents of all monomials of $S_{X}\oplus S_{Y}$ is
 $$\{(s,r)\mid s,r\in \mathbb{Z};  s\geq 0 \text{ or } r\geq 0\}.$$
The set of the exponents of all monomials of $S_{XY}$ is $\{(s,r)\mid s,r\in \mathbb{Z}\}$. 
Then $$H^2_{\frak{ m}}(S)=S_{XY}/\mathrm{Im} (\phi)=\oplus_{s,r< 0} \mathbb{F}_pX^{s}Y^{r}.$$
Thus, $(H^2_{\frak{ m}}(S))_{(s,t)}\neq 0$ if and only if $s<0$ and $t<0$.
From an exact sequence 
$$0\longrightarrow S_1:=S(-a,-a) \xrightarrow{.f}S\longrightarrow R\longrightarrow 0,$$
induces the following exact
$$0=H^1_{\frak{ m}}(S)\longrightarrow H^1_{\frak{ m}}(R)\longrightarrow H^2_{\frak{ m}}(S_1)\xrightarrow{\psi} H^2_{\frak{ m}}(S)\to 0,$$ 
where $\psi$ is the mutiplication by $f$.\\
We have $H^1_{\frak{ m}}(R)=\mathrm{Ker} (\psi)$ has the set of exponents $E$ defined as follows
$$E=\{(s,r)\mid s,r<a; s\geq 0 \text{ or } r\geq 0\}.$$
($E$ is the colouring area between angle $\widehat{x'My'}$ and angle $\widehat{x'Oy'}$ including ray $Ox'$ and ray $Oy'$.) 
\begin{center}
\includegraphics[scale=0.8]{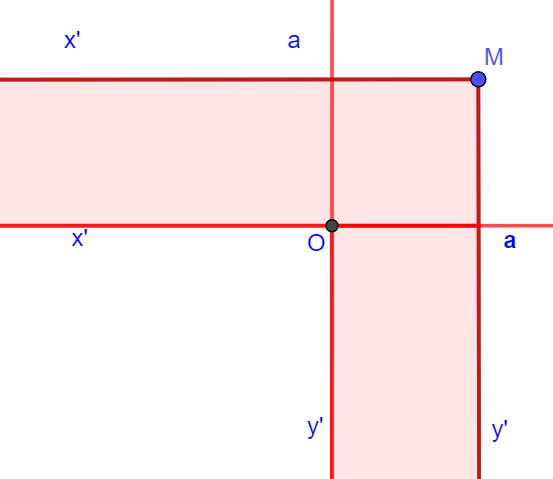}
\end{center}
The e-th Frobenius map $F^e: H^1_{\frak{ m}}(R)\to H^1_{\frak{ m}}(R) $ corresponds to a homothety on set $E$ of center $O$ and ratio $k=p^e$. Then the set of all exponents of $0^F_{H^1_{\frak{ m}}(R)}$ is
$$E_1=\left( E \setminus (Ox'\cup Oy')\right)\cup {O}.$$
So $\mathrm{Fte}(R)=\mathrm{HSL}(R)=\lceil \log_p(a) \rceil$, where $\lceil u \rceil$ is minimal integer such that greater than or equal to $u$. 
If we choose $p=a=2$, then the Hilbert seri of $R$ is
$$H_R(t)=\frac{1+t+t^2+t^3}{1-t}.$$
Thus, $e(R)=Q(1)=4$ where $Q(t)=1+t+t^2+t^3$ (cf. \cite[Section 6.1.1]{HH11}).
We have the equality in Theorem \ref{ChanTrenBoiCM}(1).
\end{example}

\end{document}